\newcommand{\sub}{\subseteq}
\newcommand{\Z}{\mathbb{Z}}
\newcommand{\R}{\mathbb{R}}
\newcommand{\C}{\mathbb{C}}
\newcommand{\N}{\mathbb{N}}
\newcommand{\eps}{\varepsilon}
\newcommand{\fj}[2]{\left\lfloor #1 \right\rfloor_#2}
\newtheorem{theorem}{Theorem}[section]
\newtheorem{lemma}[theorem]{Lemma}
\newtheorem{proposition}[theorem]{Proposition}
\newtheorem{corollary}[theorem]{Corollary}
\numberwithin{equation}{section}
\DeclarePairedDelimiter{\norm}{\lVert}{\rVert}
\let\oldnorm\norm
\def\norm{\@ifstar{\oldnorm}{\oldnorm*}}
\begin{document}

\title[Curved Kakeya]
{Construction of a curved Kakeya set}

\author{Tongou Yang}

\address[Tongou Yang]{Department of Mathematics, University of California\\
Los Angeles, California 90095, United States}
\email{tongouyang@math.ucla.edu}

\author{Yue Zhong}

\address[Yue Zhong]{ Department of Mathematics, Sun Yat-sen University \\
Guangzhou, Guangdong 510275, P.R. China}
\email{zhongy69@mail3.sysu.edu.cn}

\dedicatory{} 
\commby{}


\begin{abstract}
We construct a compact set in $\R^2$ of measure $0$ containing a piece of a parabola of every aperture between $1$ and $2$. As a consequence, we improve lower bounds for the $L^p$-$L^q$ norm of the corresponding maximal operator for a range of $p,q$. Moreover, our construction can be generalised from parabolas to a family of $C^2$ curves satisfying suitable curvature conditions.
\end{abstract}

\subjclass[2020]{42B99}

\maketitle

\tableofcontents


\section{Introduction}
Consider Wolff's circular maximal Kakeya function introduced in \cite{Kolasa_Wolff}:
\begin{equation*}
    \mathcal C f(r)=\sup_{x\in \R^2}\int_{C(x,r)}|f(y)|dy,
\end{equation*}
initially defined for continuous functions $f:\R^2\to \C$ with compact support, where $r\in [1,2]$ and $C(x,r)$ denotes the circle centred at $x$ of radius $r$. For $\delta>0$, we can also consider a $\delta$-thickened version of the maximal function, defined by
\begin{equation*}
    \mathcal C_\delta f(r)=\sup_{x\in \R^2}\frac 1{|C_\delta(x,r)|}\int_{C_\delta(x,r)}|f(y)|dy,
\end{equation*}
where $C_\delta(x,r)$ denotes the annulus centred at $x$ of radius $r$ and thickness $\delta$, namely, $C_\delta(x,r)=\{y\in \R^2:r-\delta\le |y-x|\le r+\delta\}$. For Lebesgue exponents $p,q\in [1,\infty]$, we are interested in the $L^p(\R^2)\to L^q([1,2])$ mapping property of $\mathcal C$ and $\mathcal C_\delta$. Wolff \cite{WolffKakeyaL3} proved the bound 
\begin{equation*}
    \norm{\mathcal C_\delta}_{L^3\to L^3}\lesssim_\eps \delta^{-\eps},\quad \forall \eps>0,
\end{equation*}
which is sharp except for the $\eps$-loss. Using this, he concluded that every compact subset of $\R^2$ containing a circle of every radius between $1$ and $2$ must have Hausdorff dimension $2$.

A closely related analogue of $\mathcal C$ is a parabolic maximal function defined by
\begin{equation*}
    \mathcal P f(a)=\sup_{(x_1,x_2)\in \R^2}\int_{0}^{1} |f(x_1+t,x_2+at^2)|dt.
\end{equation*}
Similarly, for each $\delta>0$, we can define the $\delta$-thickened version
\begin{equation*}
    \mathcal P_\delta f(a)=\sup_{(x_1,x_2)\in \R^2}(2\delta)^{-1}\int_{0}^{1}\int_{-\delta}^{\delta} |f(x_1+t,x_2+at^2+s)|ds dt.
\end{equation*}

\subsection{Curves of cinematic curvature}\label{sec:cinematic}
Both maximal functions can be thought of as special cases of a family of curves with {\it cinematic curvature}, introduced by Sogge \cite{Sogge_first_cinematic}. See also \cite{Kolasa_Wolff}, \cite{Zahl_algebraic}, \cite{Zahl_nonalgebraic}, \cite{PYZ2022}, \cite{ChenGuoYang2023}, \cite{Zahl2023}, \cite{ChenGuo} for related discussions on maximal operator bounds related to curves of cinematic curvature. For example, it can be checked that for any smooth function $f:[0,1]\to \R$ with $f''\ne 0$, the family of curves $\{(t,af(t)):0\le t\le 1\}$, $1\le a\le 2$ satisfies the cinematic curvature condition defined in \cite{Kolasa_Wolff}. Thus, one has the following upper bounds for the parabolic maximal operator:
    \begin{align}
        &\norm{\mathcal P_\delta f}_{L^3([1,2])}\lesssim_\eps \delta^{-\eps}\norm f_{L^3(\R^2)},\quad \forall \eps>0,\label{eqn_L3}\\
        &\norm{\mathcal P_\delta f}_{L^q([1,2])}\lesssim \delta^{\frac 1 2-\frac 3 {2p}}\norm{f}_{L^p(\R^2)},\quad p<\frac 8 3,\quad q\ge \frac{2p}{p-1}.\label{eqn_KW}        
    \end{align}
Indeed, \eqref{eqn_L3} follows from, say, \cite{Zahl_algebraic}, and \eqref{eqn_KW} follows from \cite{Kolasa_Wolff}. It is worth noting that \eqref{eqn_KW} has no $\eps$-losses in the power of $\delta$; on the other hand, we do not know whether the condition $p<8/3$ is necessary for \eqref{eqn_KW} to hold, although it is used in the combinatorial proof in \cite{Kolasa_Wolff}.

\subsection{Lower bounds}
In order to simplify the exposition, we first restrict ourselves to the parabolic maximal operator $\mathcal P_\delta$. Define 
\begin{equation*}
       B(p,q)=\sup_{f\ne 0}\frac {\norm{\mathcal P f}_{L^q([1,2])}}{\norm f_{L^p(\R^2)}},\quad B(p,q,\delta)=\sup_{f\ne 0}\frac {\norm{\mathcal P_\delta f}_{L^q([1,2])}}{\norm f_{L^p(\R^2)}}.
\end{equation*}
The inequalities \eqref{eqn_L3} and \eqref{eqn_KW} give upper bounds for $B(p,q,\delta)$. On the other hand, the existence of some curved Kakeya sets provides some quantitative lower bounds for $B(p,q,\delta)$. For clarity, we first use the notion of (vertical) $\delta$-thickening $S(\delta)$ for a subset $S\sub \R^2$:
\begin{equation*}
    S(\delta):=\{x+(0,\eps):x\in S,-\delta\le \eps\le \delta\}.
\end{equation*}
Also, throughout this article, we denote by $|S|$ the two-dimensional Lebesgue measure of a measurable subset $S\sub \R^2$. 
\begin{theorem}[Parabolic variant of Kolasa-Wolff construction \cite{Kolasa_Wolff}]\label{thm_KW}
There exists a compact subset $K$ of $\R^2$ of Lebesgue measure $0$ that contains a piece of length $\sim 1$ of a parabola of every aperture between $1$ and $2$. Moreover, its $\delta$-thickening $K(\delta)$ satisfies $|K(\delta)|\lesssim (\log \delta^{-1})^{-2}(\log \log \delta^{-1})^{2}$. Consequently, we have the lower bound for every $p,q\in [1,\infty]$:
\begin{equation*}
    B(p,q,\delta)\gtrsim (\log \delta^{-1})^{2/p}(\log \log \delta^{-1})^{-2/p},
\end{equation*}
where the implicit constant is independent of $\delta$. In particular, $B(p,q)=\infty$ if $p<\infty$.
\end{theorem}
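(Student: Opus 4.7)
The lower bound reduces immediately to the measure estimate on $K(\delta)$. I would test $\mathcal P_\delta$ on $f = \mathbf{1}_{K(\delta)}$: since every $a \in [1,2]$ admits a translate of $\{(t, at^2) : t \in [0,1]\}$ contained in $K \subset K(\delta)$, picking that translate shows $\mathcal P_\delta f(a) \ge (2\delta)^{-1} \int_0^1 \int_{-\delta}^\delta 1 \,ds\,dt = 1$, hence $\norm{\mathcal P_\delta f}_{L^q([1,2])} \gtrsim 1$ against $\norm{f}_{L^p(\R^2)} = |K(\delta)|^{1/p}$. This yields $B(p,q,\delta) \gtrsim |K(\delta)|^{-1/p}$, and the stated lower bound together with $B(p,q) = \infty$ (for $p<\infty$, by sending $\delta \to 0$ and using $|K| = 0$) follow.

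The main work is the construction of $K$. I would realise it as a decreasing intersection $\bigcap_n E_n$, where each $E_n$ is a finite union of $\delta_n$-thickenings of translated parabolic arcs $\{(t + x_1, at^2 + x_2) : t \in [0,1]\}$, contains such an arc for every aperture $a \in [1,2]$, and satisfies $|E_n| \to 0$. A compactness argument (extracting, for each $a$, a convergent subsequence of the chosen translations) then supplies a length-$\sim 1$ parabolic arc of every aperture inside $K$, and $|K(\delta)| \le |E_n(\delta)|$ for every $n$. The construction is a parabolic adaptation of the Kolasa--Wolff circular Kakeya scheme; the geometric input replacing the tangency of near-radii circles is the Chebyshev-type observation that two parabolas $y = a_1 t^2$ and $y = a_2 t^2$ with $|a_1 - a_2| \le \eps$, after one of them is translated so that their difference vanishes at the endpoints of an interval $[r_1, r_2]$, agree within $\eps(r_2-r_1)^2/4$ on that interval. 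This permits packing parabolas of close apertures into narrow slabs along long sub-arcs.

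The iteration goes roughly as follows: at stage $n$, partition $[1,2]$ dyadically into $2^n$ aperture groups, pair adjacent groups, and translate each pair so that its two sub-families fit inside a common thickened slab, iterating a Perron-tree-style merging. One tracks the total slab area and shows that each merging step reduces it, eventually yielding $|E_n(\delta_n)| \lesssim n^{-2}(\log n)^2$; setting $n \sim \log\delta^{-1}$ then produces the claimed bound. The hardest part is exactly this recursive measure estimate: the combinatorial doubling of the slab count at each level must be strictly outweighed by the overlap savings, while preserving the invariant that every aperture still has a length-$\sim 1$ arc inside $E_n$. The $(\log\log \delta^{-1})^2$ factor appears to arise from optimising auxiliary scale parameters (how the aperture partition is matched to the horizontal partition of each arc) at each step of the iteration, and getting the sharp power $-2$ on $\log \delta^{-1}$ requires a two-scale bookkeeping rather than a single exponential decay.
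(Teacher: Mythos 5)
Your approach matches the paper's: Theorem~\ref{thm_KW} is proved in the paper simply by citing Kolasa--Wolff's Proposition~1.1 for circles and observing that the cut-and-translate Perron-tree scheme adapts from circles to parabolas, and your sketch reproduces exactly that scheme, together with the standard reduction to the measure estimate via testing $\mathcal P_\delta$ on $\mathbf 1_{K(\delta)}$. The one variant worth noting is your compression mechanism: you force two neighbouring parabolas to coincide at the \emph{two endpoints} of a subinterval $[r_1,r_2]$ (so the difference is $\eps(t-r_1)(t-r_2)$), whereas Kolasa--Wolff and the paper's own refined construction in Section~\ref{sec_construction} force \emph{first-order tangency at a single interior point} $x_j$; both give the same quadratic $\sim\eps\,|I|^2$ saving per merge, but the tangency version has the advantage (Lemma~\ref{lemma of lower}) that the translated curve stays weakly above the base curve, which keeps the identity of the ``bottom curve'' of each merged group fixed across steps and thereby streamlines the bookkeeping in the recursive measure estimate that you flag as the hardest part.
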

Indeed, this follows from an easy adaptation of the main construction of \cite[Proposition 1.1]{Kolasa_Wolff} for circles to the case of parabolas.

We also refer to other related Kakeya set constructions, such as those in \cite{Schoenberg1}, \cite{Schoenberg2}, \cite{BesicovitchRado}, \cite{Kinney}, \cite{Davies}, \cite{Talagrand}, \cite{Sawyer}, \cite{Keich}, \cite{ChangCsornyei}, \cite{MR4534746}, \cite{ChenYanZhong}.

The main theorem of this paper is an improvement of Theorem \ref{thm_KW} as follows.
\begin{theorem}[Main theorem]\label{thm_main}
There exists a compact subset $K$ of $\R^2$ of Lebesgue measure $0$ that contains a piece of length $\sim 1$ of a parabola of every aperture between $1$ and $2$. Moreover, its $\delta$-thickening $K(\delta)$ has measure $\lesssim (\log \delta^{-1})^{-2}$. Consequently, we have the lower bound 
\begin{equation*}
    B(p,q,\delta)\gtrsim (\log \delta^{-1})^{2/p}.
\end{equation*}
\end{theorem}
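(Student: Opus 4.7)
My plan is to construct, for each sufficiently small dyadic $\delta>0$, a finite union $K_\delta$ of parabolic arcs whose apertures are $\delta$-dense in $[1,2]$ and whose vertical $\delta$-thickening $K_\delta(\delta)$ has Lebesgue measure $\lesssim (\log\delta^{-1})^{-2}$. The compact set $K$ of the theorem is then produced by a standard diagonalisation: along a sparse sequence $\delta_n\to 0$, nested intersections of thickenings of $K_{\delta_n}$ converge (in Hausdorff distance, say) to a compact measure-zero set still containing a parabolic arc of every aperture in $[1,2]$. Once the per-scale measure bound is established, the lower bound on $B(p,q,\delta)$ is immediate by testing $\mathcal P_\delta$ on the indicator $\mathbf{1}_{K_\delta(\delta)}$: for every $a\in[1,2]$ there is a translate of the aperture-$a$ parabolic arc contained in $K_\delta(\delta)$, so $\mathcal P_\delta\mathbf{1}_{K_\delta(\delta)}(a)\gtrsim 1$, whereas $\|\mathbf{1}_{K_\delta(\delta)}\|_{L^p}\lesssim(\log\delta^{-1})^{-2/p}$.

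The geometric input behind the measure estimate is a two-parabola overlap computation: two arcs $\{(t,at^2):t\in[0,1]\}$ and $\{(t,(a+\eps)t^2):t\in[0,1]\}$ can, after a translation by some $(u,v)\in\R^2$, be arranged so that their vertical $\delta$-thickenings share a common sub-strip of horizontal length $\sim\sqrt{\delta/\eps}$. This is a direct quadratic-completion computation of the type used in \cite{Kolasa_Wolff} for circles, and it drives the multi-scale structure: organizing $N\sim\delta^{-1}$ aperture values $a_k=1+k/N$ and merging dyadically at scales $\eps_j=2^j/N$ over $\sim\log N$ stages, each stage yields a per-pair gain proportional to $\sqrt{\delta/\eps_j}\cdot\delta$, whose telescoping sum gives the target $(\log\delta^{-1})^{-2}$ after dividing by $N\delta\sim1$.

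The main obstacle, which is where \cite{Kolasa_Wolff} lose the extra $(\log\log\delta^{-1})^2$, is the scheduling of translations across the $\sim\log N$ scales: a shift applied at stage $j$ in order to realise the $\sqrt{\delta/\eps_j}$ overlap between dyadic partners risks disturbing alignments already arranged at finer stages $j'<j$, and the naive bookkeeping of these disturbances produces one $\log\log$ factor per axis of the tree. My plan to eliminate the loss is a balanced merging scheme in the spirit of Keich's refinement of the Perron-tree: at stage $j$ the newly introduced shift is restricted to act only on an $O(2^{-j})$ fraction of the already-processed configuration, so that cross-stage errors form a geometric rather than arithmetic series in $j$ and sum to an $O(1)$ multiplicative loss. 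The technical heart of the argument will be an inductive estimate showing that, at every stage $j$, the freshly gained overlap at stage $j$ dominates the measure perturbation introduced at that stage by a constant independent of $j$; once this is in place, the entire measure of $K_\delta(\delta)$ telescopes down to $\lesssim(\log\delta^{-1})^{-2}$, and the theorem follows.
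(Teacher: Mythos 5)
Your overall framework is in the right spirit — multi-scale merging of parabolic arcs via translations that create tangencies, followed by a nested-intersection limit and a test-function argument for the operator lower bound — and the $\sqrt{\delta/\eps}$ overlap heuristic is indeed the correct local computation. However, the ``technical heart'' you defer to is precisely where your proposal diverges from a working proof, and the mechanism you sketch does not match what actually closes the gap.

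The paper does not restrict stage-$j$ shifts to an $O(2^{-j})$ fraction of the configuration (a Keich-style balanced-merge scheduling); at every step $j$ it translates half of the surviving groups, just as in Kolasa--Wolff. The removal of the $(\log\log\delta^{-1})^{2}$ factor comes instead from three concrete choices: (i) the tangency abscissas are taken to be \emph{evenly spaced}, $x_j=2j/M$ with $j=1,\dots,M/2$, so that for $x_0\in[x_j,x_{j+1}]$ the thickness of a group is controlled by $\sum_{i\le j}2^{i}|x_0-x_i|^{2}$; (ii) a Taylor-expansion lemma reduces the two-curve separation after a tangency at $x_i$ to a quadratic error $\lesssim|x_0-x_i|^{2}\lesssim (j-i)^{2}/M^{2}$, and the resulting sum $\sum_{i\le j}2^{i}(j-i)^{2}/M^{2}\sim 2^{j}/M^{2}$ telescopes geometrically purely because the weights $2^{i}$ concentrate at $i=j$ — not because shifts are throttled; and (iii) a cutoff throwing away the part of the construction over $[0,\,4\log M/M]$ is needed to dominate the base thickness $2^{-M}$ by $M^{-2}2^{j-M}$, i.e.\ to guarantee $j\gtrsim\log M$. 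Your sketch contains none of these three ingredients, and in particular it is not clear that the balanced-merge scheme, even if made precise, would keep every aperture in $[1,2]$ represented (restricting shifts to a vanishing fraction of groups threatens the ``piece of every parabola'' requirement) or would make the per-stage perturbations sum to an $O(1)$ multiplicative loss rather than an additive one. As written, the claim that ``cross-stage errors form a geometric series'' is asserted, not demonstrated, and it is exactly the step that carries the theorem. The surrounding scaffolding (diagonalisation via nested thickenings $K_n(2^{-M_1-\cdots-M_n})$, the indicator test giving $B(p,q,\delta)\gtrsim(\log\delta^{-1})^{2/p}$) is fine and matches the paper, but the core measure estimate needs the explicit tangency-point bookkeeping above.
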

Namely, by refining the main construction in \cite{Kolasa_Wolff}, Theorem \ref{thm_main} removes the $\log \log \delta^{-1}$ factor in the lower bound given by Theorem \ref{thm_KW}.

\subsection{A generalised curved Kakeya set}
We can generalise the construction in Theorem \ref{thm_main} from parabolas to graphs of functions obeying a suitable curvature condition (\eqref{eqn_condition_f_3} below).

Define a $C^2$-function $f:[0,1]\to \R$ that satisfies the following conditions:
\begin{align}
& f'(0)\ge 0,\quad \inf_{x\in [0,1]} f''(x)>0, \label{eqn_condition_f_1}\\
    & f''' \text{ exists and is bounded on }(0,1),\label{eqn_condition_f_2}\\
    & f'f'''-(f'')^2\le 0 \text{ on }(0,1).\label{eqn_condition_f_3}
\end{align}
With this, define the corresponding maximal operators:
\begin{align}
    \mathcal R g(a)&=\sup_{(x_1,x_2)\in \R^2}\int_{0}^{1} |g(x_1+t,x_2+af(t))| dt\nonumber\\
    \mathcal R_\delta g(a)&=\sup_{(x_1,x_2)\in \R^2}(2\delta)^{-1}\int_{0}^{1}\int_{-\delta}^{\delta} |g(x_1+t,x_2+af(t)+s)|ds dt,\label{eqn:cinematic_delta}
\end{align}  
and denote the corresponding operator norms
\begin{equation*}
       R(p,q)=\sup_{g\ne 0}\frac {\norm{\mathcal R g}_{L^q([1,2])}}{\norm g_{L^p(\R^2)}},\quad R(p,q,\delta)=\sup_{g\ne 0}\frac {\norm{\mathcal R_\delta g}_{L^q([1,2])}}{\norm g_{L^p(\R^2)}}.
\end{equation*}

\begin{theorem}\label{thm_general}
    Let $f:[0,1]\to \R$ be a $C^2$ function obeying \eqref{eqn_condition_f_1}\eqref{eqn_condition_f_2}\eqref{eqn_condition_f_3}. Then there exists a compact subset $K$ of $\R^2$ of Lebesgue measure $0$ that contains a translated copy of a piece of length $\sim 1$ of the graph of $y=af(x)$ for every $1\le a\le 2$. Moreover, its $\delta$-thickening $K(\delta)$ has measure $\lesssim (\log \delta^{-1})^{-2}$. Consequently, we have the lower bound 
\begin{equation}\label{eqn_M_lower_bound}
    R(p,q,\delta)\gtrsim (\log \delta^{-1})^{2/p},
\end{equation}
and in particular, $R(p,q)=\infty$ for $p<\infty$.
\end{theorem}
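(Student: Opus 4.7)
The plan is to re-run the construction from Theorem~\ref{thm_main} with the parabolas $at^2$ replaced by the curves $af(t)$, treating $af(t)$ as a uniformly controlled perturbation of a parabola. As a preliminary check, a direct calculation shows that $u_a(t) := af(t)$ obeys the cinematic curvature determinant \eqref{eqn_CGY}: indeed $\det = a\bigl((f'')^2 - f'f'''\bigr) > 0$ by \eqref{eqn_condition_f_1} and \eqref{eqn_condition_f_3}. For any $t_0 \in [0,1]$ and small $\ell > 0$, Taylor expansion gives
\[
af(t_0+h) = af(t_0) + af'(t_0)h + \tfrac{a}{2}f''(t_0)h^2 + O(\ell^3), \qquad |h|\le \ell,
\]
so on each window of size $\ell$, the graph of $af$ agrees, modulo an affine correction in $h$, with a parabola of aperture $\tfrac{a}{2}f''(t_0)$ up to an error of size $O(\ell^3)$. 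Since $\inf f'' > 0$ is uniform in $t$, the aperture range $a\in[1,2]$ yields a uniformly open range of local parabolic apertures.

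Next, I would revisit the proof of Theorem~\ref{thm_main} and isolate where the exactly parabolic structure was used. The only essential quantitative input is the following separation estimate: after matching values and first derivatives at a common point $t_0$, two curves $af$ and $a'f$ separate as $\tfrac{|a-a'|}{2}f''(t_0)(t-t_0)^2 + O(|t-t_0|^3)$. By $\inf f''>0$ this matches the parabolic separation $|a-a'|(t-t_0)^2$ up to a bounded factor, while the cubic error is controlled by the uniform bound on $f'''$ from \eqref{eqn_condition_f_2}. Condition \eqref{eqn_condition_f_3}, i.e., log-concavity of $f'$, ensures that the ordering and tangency structure of the graphs $\{af\}_{a\in[1,2]}$ at every iteration stage match those of parabolas, preserving the combinatorics of the Kolasa--Wolff-style nesting scheme. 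Running the same iterative tree construction with these substitutions yields a compact set $K\subset \R^2$ of Lebesgue measure zero which contains a translate of a length-$\sim 1$ piece of the graph of $af$ for every $a\in[1,2]$.

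The main obstacle will be verifying that the cubic perturbations do not accumulate across the $\sim \log\delta^{-1}$ stages of the iteration, which would degrade the bound from $(\log\delta^{-1})^{-2}$ to something weaker. Since at the $n$-th stage one works at a geometrically shrinking scale $\ell_n$, the cubic error $O(\ell_n^3)$ is a vanishing fraction of the quadratic principal term $\sim|a-a'|\ell_n^2$, and the telescoped error over all stages remains bounded. Once a set $K$ with $|K(\delta)| \lesssim (\log \delta^{-1})^{-2}$ is constructed, the lower bound \eqref{eqn_M_lower_bound} follows by testing $\mathcal R_\delta$ on $g = \mathbf 1_{K(\delta)}$: for each $a\in[1,2]$ a translate of the graph of $af$ lies in $K$, so $\mathcal R_\delta g(a) \gtrsim 1$ uniformly in $a$, while $\|g\|_{L^p} = |K(\delta)|^{1/p} \lesssim (\log \delta^{-1})^{-2/p}$. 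Sending $\delta\to 0$ yields $R(p,q)=\infty$ for $p<\infty$.
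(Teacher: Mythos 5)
Your route is genuinely different from the paper's: you propose to regard $af(t)$ as a controlled perturbation of a parabola and re-run a parabola construction, telescoping the cubic errors. The paper does the opposite—it builds directly with the curves $af(t)$, so Theorem~\ref{thm_main} (parabolas) is deduced as a special case of Theorem~\ref{thm_general}, not used as input. As written, your plan is circular within the scope of this paper: there is no standalone proof of the parabola case to "revisit and isolate" and then perturb; one would first have to write that proof out, at which point the direct argument for $af$ is no harder.

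Two steps in your outline also gloss over real work. First, the two-curve separation after forcing tangency at $t_0$ is not $\tfrac{|a-a'|}{2}f''(t_0)(t-t_0)^2 + O(|t-t_0|^3)$: the cubic remainder must itself scale as $O(|a-a'|\,|t-t_0|^3)$, and the quadratic coefficient is not $\tfrac{a-a'}{2}f''(t_0)$ but rather $\tfrac{a-a'}{2}\cdot\tfrac{(f'')^2 - f'f'''}{f''}(t_0) + O(|a-a'|^2)$, because the tangency translation $u \sim \tfrac{(a-a')f'}{af''}$ shifts the second-derivative comparison. It is exactly \eqref{eqn_condition_f_3} that makes this coefficient nonnegative, so the sign is not automatic from "local parabolic resemblance." Second, and more importantly, your assertion that log-concavity of $f'$ preserves the ordering/tangency combinatorics is precisely the content of the paper's Lemma~\ref{lemma of lower}: the translated curve $af(\cdot-u)+v$ lies globally above $\tilde af(\cdot)$, with equality only at the tangent point. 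Its proof is a genuine monotonicity argument (differentiate an auxiliary $F(a)$, reduce to $G(X)\ge G(X_0)$, then to $H'(X)\ge 0$ via \eqref{eqn_condition_f_3}), not a perturbative consequence of the quadratic Taylor term. Without this lemma the cut-and-slide stacking can fail, and the measure bound $|F_M|\lesssim \delta_0 M^{-2}$ (Theorem~\ref{thm_measure_bound}, whose Taylor-expansion bookkeeping is what actually produces the $M^{-2}$ rather than a weaker power) has no foundation. Your final step—testing $\mathcal R_\delta$ on $\mathbf 1_{K(\delta)}$ to get \eqref{eqn_M_lower_bound}—matches the paper.
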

In the following of this article, we prove Theorem \ref{thm_general}, from which Theorem \ref{thm_main} follows as a corollary.

\subsection{Outline of the article}
In Section \ref{sec_construction} we construct a thickened version of the curved Kakeya set $K$. In Section \ref{sec_iteration} we iterate the preceding construction to obtain an actual curved Kakeya set of zero measure. In the Appendix, we give a brief summary of the current best upper and lower bounds for $R(p,q,\delta)$.

\subsection{Acknowledgements}
Tongou Yang was supported by the Croucher Fellowships for Postdoctoral Research. Yue Zhong was supported in part by the National Key R\&D Program of China (No. 2022YFA1005700) and the NNSF of China (No. 12371105). Both authors thank Sanghyuk Lee and Shaoming Guo for bringing this problem to our attention, and Lixin Yan, Xianghong Chen and Mingfeng Chen for helpful suggestions. We especially thank an anonymous referee for their careful reading and very detailed suggestions, which greatly improved the presentation of the manuscript.


\section{Compression by forcing tangencies}\label{sec_construction}

Let $M\in 2\N$ be large enough. In this section, we are going to present the $M$-th building blocks $F_M$ of the construction of the curved Kakeya set $K$ in Theorem \ref{thm_general}. This is done using a ``cut-and-slide" procedure. The idea at each step $j$ is to create many tangencies at a fixed $x$-coordinate $x_j$ by translations, so that the curved rectangles are compressed near $x_j$. The following is the precise construction.

Unless otherwise specified, all implicit constants are allowed to depend on $f$ only; more precisely, they depend on $\norm{f}_{C^2}$, $\inf f''$ and $\sup |f'''|$ given in \eqref{eqn_condition_f_1}\eqref{eqn_condition_f_2}\eqref{eqn_condition_f_3}.

We start with a $C^2$-function $f:[0,1]\to \R$ obeying \eqref{eqn_condition_f_1}\eqref{eqn_condition_f_2}\eqref{eqn_condition_f_3}. Fix $1 \leq a_0 \leq 2 $ and $0 < \delta_0 \leq 2-a_0$. Our goal in this section is to construct $F_M=F_M(a_0,\delta_0)$ as follows.

\subsection{Step 0}
Consider the initial ``curved rectangle" 
\begin{equation}\label{eqn_T0}
    T^{(0)}(a_0,\delta_0):=\{(x,af(x)):x\in [0,1],a\in [a_0,a_0+\delta_0]\}.
\end{equation}
Fix $M\in 2\mathbb N$. We divide $T^{(0)}(a_0,\delta_0)$ into $2^M$ ``curved rectangles" of the form
\begin{equation*}
    T^{(0)}_{n}:=T^{(0)}(a_n,\delta_0 2^{-M}),\quad n=0,1,\dots,2^M-1,
\end{equation*}
where we denote
\begin{equation}\label{eqn_defn_an}
    a_n:=a_0+n\delta_0 2^{-M}.
\end{equation}
The curve at the bottom of $T^{(0)}_{n}$ is given by
\begin{equation*}
    L^{(0)}_n(x):=a_n f(x).
\end{equation*}

Partition $[0,1]$ uniformly into $M/2$ intervals of length $2M^{-1}$, where the partitioning points are given by 
\begin{equation}\label{eqn:defn_x_j}
    x_j:=2jM^{-1},\quad j=0,1,\dots,M/2.
\end{equation}

\subsection{Step 1}
For odd $n=1,3,\dots,2^{M}-1$, we now apply different translations to $T^{(0)}_{n}$ so that its bottom curve $L^{(0)}_{n}$ will be tangent to the bottom curve $L^{(0)}_{n-1}$ at $x_1$. That is, we need to find translations $u:=u_{n}^{(1)}$, $v:=v_{n}^{(1)}$ such that
$$
\left\{
\begin{aligned}
    & a_n f(x_1-u) + v = a_{n-1} f(x_1) \\
    & a_n f'(x_1-u) = a_{n-1} f'(x_1) \\
\end{aligned}  
\right.
$$

To find the solutions, we first focus on the second equation. First, using the implicit function theorem and the fact that $\inf f''>0$, we see that for $M$ large enough, such $u$ always exists and $0<u\lesssim \delta_0 2^{-M}$. To find the expression of $u$, by the mean value theorem, there exists some $\xi=\xi_{n}^{(1)}$ such that
\begin{equation*}
    f'(x_1)-f'(x_1-u)=uf''(\xi).
\end{equation*}
Moreover, such $\xi$ must be unique since $f''>0$. Thus the second equation gives 
\begin{equation*}
    u=u_n^{(1)}=\frac {\delta_0 2^{-M}}{a_n}\frac{f'(x_1)}{f''(\xi)}.
\end{equation*}
Plugging into the first equation, we can find $v$, which is also positive. Also, by Lemma \ref{lemma of lower} below, the translated curve $L_{n}^{(0)}+(u,v)$ is strictly above $L_{n-1}^{(0)}$ except at the tangent point.

After Step 1, we obtain $2^{M-1}$ larger curved figures
\begin{equation}\label{T^1}
    T^{(1)}_n:=T_n^{(0)}\cup (T_{n+1}^{(0)}+(u_{n+1}^{(1)},v_{n+1}^{(1)})),
\end{equation}
where $n=0,2,\dots,2^{M}-2$, that are compressed well at $x_1$. Moreover, for each even $n$, the curve $L_n^{(0)}$ is still the bottom curve of $T^{(1)}_n$. Denote
\begin{equation*}
    T^{(1)}:=\bigcup_{n\in 2\N} T^{(1)}_n.
\end{equation*}

\subsection{Step $2$}
We continue in a similar way, this time compressing $T^{(1)}_n$, $n=0,2,\dots,2^M-2$ at the point $x_2$. More precisely, for $n=2,6,10,\dots,2^M-2$, we translate $T^{(1)}_{n}$ further by some $(u_{n}^{(2)},v_{n}^{(2)})$ so that its bottom curve $L_{n}^{(0)}$ is tangent to $L^{(0)}_{n-2}$ at $x_2$. Similarly to \eqref{T^1}, we obtain $2^{M-2}$ larger curved figures $T^{(2)}_n$, $n=0,4,\dots,2^M-4$, whose bottom curve is $L_n^{(0)}$ by Lemma \ref{lemma of lower} below, and they are compressed well at $x_2$. Denote
\begin{equation*}
    T^{(2)}:=\bigcup_{n\in 4\N} T^{(2)}_n.
\end{equation*}
Refer to Figure \ref{fig:steps}, which shows two steps of translations when $2^M=16$. Here after Step 1, the $1,3,\dots,15$th pieces are translated so that their bottoms are tangent at $x_1$ to the bottoms of $0,2,\dots,14$th pieces, respectively. After Step 2, the 1st and the 2nd pieces (the pair $(1,2)$) are translated together, such that the bottom of the 2nd piece is tangent at $x_2$ to the bottom of the $0$th piece. Note that the 1st piece is translated in both Steps 1 and 2. The pairs $(5,6),(9,10),(13,14)$ are translated in a similar way to the pair $(1,2)$.
\begin{figure}
    \centering
    \includegraphics[width=\linewidth]{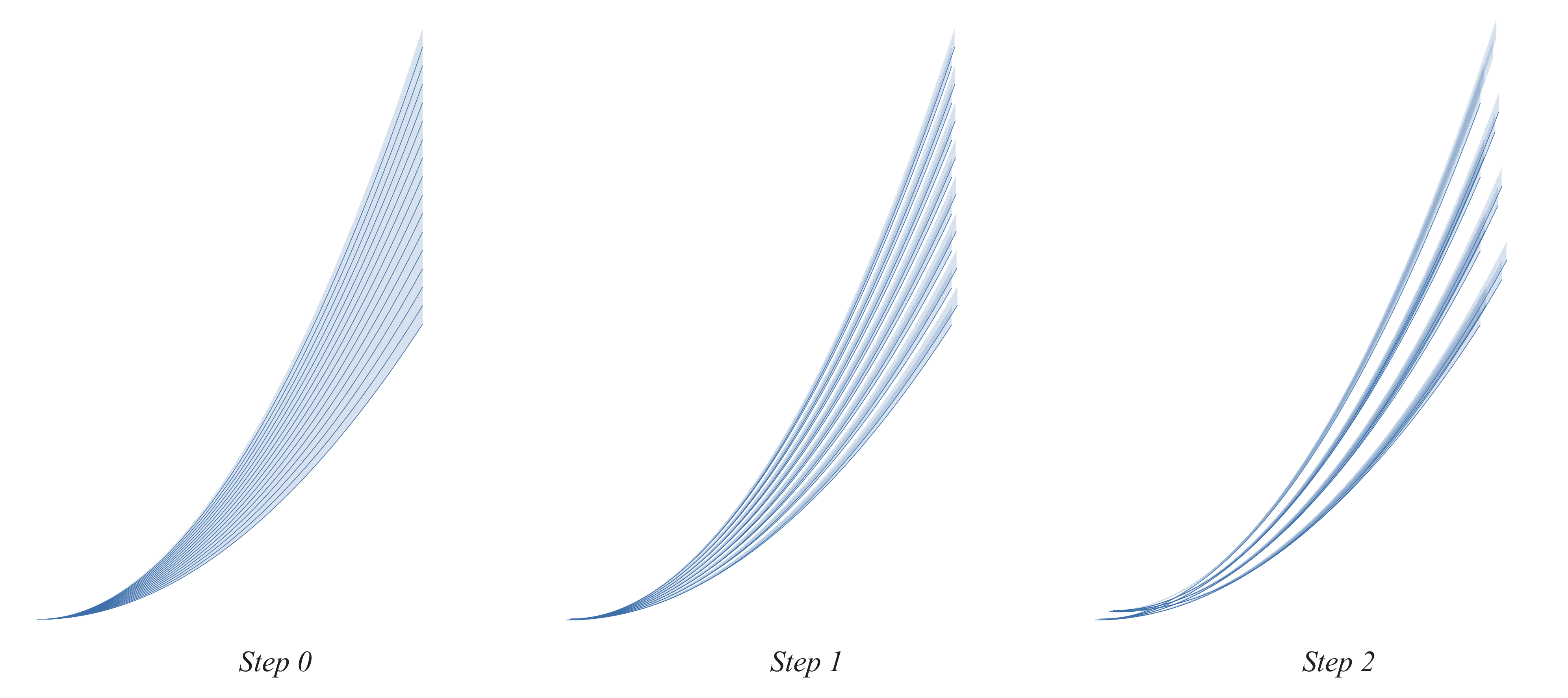}
    \caption{Figure after two steps when $2^M=16$.}
    \label{fig:steps}
\end{figure}

\subsection{Step $j$}
Now we describe a general Step $j$. For each $n$ of the form $2^j k+2^{j-1}$, $k=0,1,\dots,2^{M-j}-1$, we translate $T_n^{(j-1)}$ by some $(u^{(j)}_{n},v^{(j)}_{n})$ so that its bottom curve $L^{(0)}_{n}$ is tangent to $L_{n-2^{j-1}}^{(0)}$ at $x_j$. The system of equations we need to solve is now
\begin{equation}\label{eqn:general_step_j}
\left\{
\begin{aligned}
    & a_n f(x_j-u_n^{(j)}) + v_n^{(j)} = a_{n-2^{j-1}} f(x_j) \\
    & a_n f'(x_j-u_n^{(j)}) = a_{n-2^{j-1}} f'(x_j) \\
\end{aligned}  
\right.
\end{equation}

By the same computation as in Step 1, we have
\begin{equation}\label{eqn_unj}
    u_{n}^{(j)} = \frac {\delta_0 2^{j-1-M}}{a_n}\frac{f'( x_j)}{f''(\xi)},
\end{equation}
where $\xi$ is the unique number such that 
\begin{equation*}
    f''(\xi)=\frac{f'(x_j)-f'(x_j-u^{(j)}_{n})}{u^{(j)}_{n}},
\end{equation*}
and its existence is guaranteed by the implicit function theorem. Thus \eqref{eqn_unj} implies that
\begin{equation}\label{eqn:unj_bound}
    0<u_{n}^{(j)}\lesssim \delta_0 2^{j-M}.
\end{equation}
Using this, $|\xi-x_j|\le |u_n^{(j)}|$ and the fact that $f'''$ is bounded, we further record that
\begin{equation}\label{eqn:unj_big_O}
    u_{n}^{(j)} = \frac {\delta_0 2^{j-1-M}}{a_n}\frac{f'( x_j)}{f''(x_j)}+O(\delta_0^2  2^{2j-2M}).
\end{equation}
Now we come to $v_n^{(j)}$. By Taylor's theorem, the first equality of \eqref{eqn:general_step_j} gives that
\begin{equation}\label{eqn_vnj}
\begin{aligned}
    v_n^{(j)} &=a_{n-2^{j-1}} f(x_j)-a_n f(x_j-u_{n}^{(j)})\\
    &=a_{n-2^{j-1}} f(x_j)-a_n f(x_j)+a_n f'(x_j)u_{n}^{(j)}+O(u_{n}^{(j)})^2\\
    &=\delta_0 2^{j-1-M}\left(-f(x_j)+\frac{f'(x_j)^2}{f''(x_j)}\right)+O(\delta_0^2  2^{2j-2M}),
\end{aligned}   
\end{equation}
where we have also used \eqref{eqn_defn_an} and \eqref{eqn:unj_big_O}. In particular, we also have
\begin{equation}\label{eqn:vnj_bound}
    |v_n^{(j)}|\lesssim \delta_0 2^{j-M}.
\end{equation}

Thus, similarly to \eqref{T^1}, we obtain $2^{M-j}$ larger curved figures $T_n^{(j)}$, $n=0,2^j,\dots,2^M-2^j$, which are compressed well at $x_j$. Also, by the following lemma, the bottom of $T_n^{(j)}$ is still $L_n^{(0)}$. Denote
\begin{equation*}
    T^{(j)}:=\bigcup_{n\in 2^j \N} T^{(j)}_n.
\end{equation*}

\begin{lemma}\label{lemma of lower}
    If $f:[0,1]\to \R$ is a $C^2$ function obeying \eqref{eqn_condition_f_1}\eqref{eqn_condition_f_2}\eqref{eqn_condition_f_3}, $x_0\in [0,1]$, and $(u,v)$ is the solution of the equation 
    $
    \left\{
    \begin{aligned}
        & a f(x_0-u) + v = \tilde a f(x_0) \\
        & a f'(x_0-u) = \tilde a f'(x_0) \\
    \end{aligned},  
    \right.
    $
    then we have $a f(x-u) + v \geq \tilde a f(x)$ for any $x$ while $a\ge \tilde a$ (such that $u,v$ exist).
\end{lemma}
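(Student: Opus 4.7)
My plan is to introduce the single-variable auxiliary function $h(x) := a f(x-u) + v - \tilde a f(x)$ and reduce the lemma to showing that $x_0$ is a global minimizer of $h$ with value zero. The two tangency equations give $h(x_0) = 0$ and $h'(x_0) = 0$ directly; moreover, $a \ge \tilde a$ together with the strict monotonicity of $f'$ (from $\inf f'' > 0$) applied to $af'(x_0-u) = \tilde a f'(x_0)$ force $u \ge 0$, so $h$ is defined on the admissible interval $[u,1]$.

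The central step is to exploit condition \eqref{eqn_condition_f_3} via the ratio $\rho(x) := f'(x-u)/f'(x)$, which is well-defined on $(\max\{u,0\}, 1]$ since $f' > 0$ on $(0,1]$ (a consequence of $f'(0) \ge 0$ and $f'' > 0$). I will rewrite
\[
h'(x) = \tilde a f'(x)\bigl[(a/\tilde a)\rho(x) - 1\bigr],
\]
and observe that the tangency reads exactly $\rho(x_0) = \tilde a/a$, so it suffices to prove that $\rho$ is non-decreasing. For this, note that \eqref{eqn_condition_f_3} is equivalent to $(f''/f')' \le 0$ on $(0,1)$, i.e., $f''/f'$ is non-increasing. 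Cross-multiplying this at the pair $x - u \le x$ yields $f''(x-u) f'(x) \ge f'(x-u) f''(x)$, which is exactly the non-negativity of the numerator of $\rho'(x)$. Hence $\rho$ is non-decreasing, so $h' \le 0$ on $[u, x_0]$ and $h' \ge 0$ on $[x_0, 1]$, and $h$ attains its global minimum zero at $x_0$.

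The main conceptual observation is the identification of \eqref{eqn_condition_f_3} with the monotonicity of $f''/f'$; the rest is routine one-variable calculus. The only step that requires a little care is the degenerate endpoint $x = u$ when $f'(0) = 0$, where $\rho$ is not defined, but $h'(u) = -\tilde a f'(u) \le 0$ still fits the sign pattern and continuity handles this corner case.
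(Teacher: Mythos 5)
Your proof is correct, and it takes a genuinely different and arguably more direct route than the paper's. The paper fixes $x$ and $x_0$, views $u,v$ as functions of $a$, and differentiates the discrepancy $F(a) = a\bigl(f(x-u(a))-f(x_0-u(a))\bigr)-\tilde a\bigl(f(x)-f(x_0)\bigr)$ with respect to $a$; this requires the implicit differentiation $u'(a) = f'(x_0-u)/(af''(x_0-u))$ and then a chain of reductions through auxiliary functions $G(X)=f(X)f''(X_0)-f'(X)f'(X_0)$ and $H(X)=G'(X)/f'(X)$, with \eqref{eqn_condition_f_3} surfacing only at the very end in $H'$. You instead fix $a$ and study $h(x)=af(x-u)+v-\tilde a f(x)$ directly as a function of $x$, which is the natural variable since the conclusion is a universal statement in $x$. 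The tangency gives $h(x_0)=h'(x_0)=0$ for free, and the entire content reduces to the monotonicity of $\rho(x)=f'(x-u)/f'(x)$. Your identification of \eqref{eqn_condition_f_3} with the monotonicity of the logarithmic derivative $f''/f'$ is the clean conceptual form of the same fact the paper encodes in $H'\geq0$, and cross-multiplying at $x-u\leq x$ gives $\rho'\geq0$ in one step. One small slip: when $f'(0)=0$ and $x=u$, the ratio $\rho(u)=f'(0)/f'(u)=0$ is in fact defined; what degenerates there is the quotient $f''/f'$ at the left endpoint, but as you note this is harmless since $h'(u)=af'(0)-\tilde a f'(u)\leq 0$ can be checked directly. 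Overall your argument is shorter, avoids the implicit-function bookkeeping, and makes the role of the cinematic-curvature-type hypothesis more transparent.
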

\begin{proof}
    Fix $\tilde a,x,x_0$. Let 
    \begin{equation*}
        F(a) = a(f(x-u(a)) - f(x_0 - u(a))) - \tilde a(f(x)-f(x_0)),
    \end{equation*}
    where we regard $u,v$ as functions of $a$. We have $F(\tilde a) = 0 $ since $u(\tilde a) =  v(\tilde a) = 0$.
    We want to show $F(a) \geq 0 $ while $a \geq \tilde a$, and it suffices to show that $F'(a) \geq 0$ for $a\ge \tilde a$.
    
    Since  $u$ is the solution of the equation $ a f'(x_0-u) = \tilde a f'(x_0) $, we have 
    $$\frac{\partial}{\partial a}(a f'(x_0-u)) = \frac{\partial}{\partial a}(\tilde a f'(x_0)),$$
    which means that 
    $$u'(a) = \frac{f'(x_0-u)}{a f^{''}(x_0-u)} .$$
    Denote $X = x - u$ and $X_0=x_0-u$. Then by direct computation,
    \begin{align*}
        F'(a) &= (f(X) - f(X_0)) - a(f'(X) - f'(X_0)) \cdot u'(a) \\
        &= \dfrac{(f(X) - f(X_0)) f''(X_0) - (f'(X) - f'(X_0))  f'(X_0)}{ f''(X_0)}\\
        &=\frac{G(X)-G(X_0)}{f''(X_0)},
    \end{align*}
    where we have denoted
    \begin{equation*}
        G(X):=f(X)f''(X_0)-f'(X)f'(X_0).
    \end{equation*}
    Then it suffices to show $G(X)\ge G(X_0)$ for all $X$, which is true if we can show $G'(X)\ge 0$ for $X\ge X_0$ and $G'(X)\le 0$ for $X\le X_0$. To this end, we consider
    \begin{equation*}
        H(X):=\frac{G'(X)}{f'(X)}=\frac{f'(X)f''(X_0)-f''(X)f'(X_0)}{f'(X)}.
    \end{equation*}
    We note that $X=x-u>0$ since $x\ge 2/M\ge u$. Thus by \eqref{eqn_condition_f_1}, $f'(X)>0$. Thus it suffices to show that $H(X)\ge 0$ for $X\ge X_0$ and $H(X)\le 0$ for $X\le X_0$. But $H(X_0)=0$, so it further suffices to show $H'(X)\ge 0$ for all $X$. But direct computation gives
    \begin{equation*}
        H'(X)=\frac{f'(X_0)[f''(X)^2-f'''(X)f'(X)]}{f'(X)^2},
    \end{equation*}
    which is nonnegative by \eqref{eqn_condition_f_3}. This finishes the proof.
\end{proof}

\subsection{End of construction}
Denote
\begin{equation*}
    m:=M/2.
\end{equation*}
We perform the above procedures for $m$ times, once at each tangent point $x_j=2j/M$, $j=1,\dots,m$, arriving at the set $T^{(m)}$. 

{\it Remark.} Compressing only $M/2$ times but not $M$ times is a new feature of this paper. On the one hand, this technical treatment is necessary (see, for example, the upper bounds of \eqref{eqn_unvn} below). On the other hand, this is also sufficient for Theorem \ref{thm_measure_bound} to hold.

We now define our required set
\begin{equation*}
    F_M:=T^{(m)}\cap \left(\left[\frac{4\log M}M,1\right]\times \R\right).
\end{equation*}
Note that this means we throw away the part of $T^{(m)}$ over $[0,x_j]$ where $j< 2\log M$. The choice of the cutoff $\frac{4\log M}M$ will be clear later in \eqref{eqn_single_thickness_bound} in the proof of Theorem \ref{thm_measure_bound}.

\subsection{Computation of translations}

Our first task is to control the sum of all translations $(u_n^{(j)},v_n^{(j)})$ that have been performed to the original curved rectangle $T^{(0)}_n$ at Steps $j=1,2,\dots,m$.

Given $n=0,1,\dots,2^M-1$, by binary expansion, we know there exist unique integers $\eps_j(n)\in \{0,1\}$, $1\le j\le M$ such that $n=\sum_{j=1}^M \eps_j(n) 2^{j-1}$. 

For convenience, we introduce the notation
\begin{equation*}
    \fj{n}{j}:=n-(n\,\,\mathrm{ mod  }\,\,2^{j-1}),
\end{equation*}
which means the ``integral part" of $n$ in $2^{j-1}\Z$. Then we note that $\eps_j(n)=1$ if and only if $u_{\fj n j}$, $v_{\fj n j}$ are defined by Step $j$.

\begin{proposition}\label{prop_translation_bound}
For $1\le j\le m$ and $n=0,1,\dots,2^M-1$, denote the partial sums 
\begin{equation}\label{eqn_partial_sum}
    U_n^{(j)}:=\sum_{i=1}^j \eps_i(n) u^{(i)}_{\fj n i},\quad V_n^{(j)}:=\sum_{i=1}^j \eps_i(n) v^{(i)}_{\fj n i}.
\end{equation}
Then we have
\begin{enumerate}
    \item \label{item:Mar_4_01} Each initial curved rectangle $T^{(0)}_n$, after $j$ steps, are exactly translated by $(U_n^{(j)},V_n^{(j)})$.
    \item \label{item:Mar_4_02} The translations satisfy
    \begin{equation}\label{eqn_partial_sum_bound}
    0\le U_n^{(j)}\lesssim \delta_0 2^{j-M},\quad |V_n^{(j)}|\lesssim  \delta_0 2^{j-M}.
\end{equation}
In particular, the total translation $(U_n^{(m)},V_n^{(m)})$ of $T_n^{(0)}$ after $m$ steps satisfies
    \begin{equation}\label{eqn_unvn}
        0\le U^{(m)}_n\lesssim\delta_0 2^{-M/2},\quad |V^{(m)}_n|\lesssim\delta_0 2^{-M/2}.
    \end{equation}
\end{enumerate}
\end{proposition}

\begin{proof}
The proof of part \eqref{item:Mar_4_01} follows by inspection, and for simplicity, we only elaborate on a typical example. For instance, if $m>100$ and $n=27$, then $T_{n}^{(0)}$ is translated according to the bottoms of $T_{27}^{(0)}$, $T_{26}^{(0)}$, $T_{24}^{(0)}$ and $T_{16}^{(0)}$ at Steps $1,2,4,5$, respectively; it remains unchanged at all other steps. Note that $\eps_j(27)=1$ if and only if $j=1,2,4,5$, whence $27-(27\,\,\mathrm{ mod }\,\,2^{j-1})=27,26,24,16$, respectively.

Part \eqref{item:Mar_4_02} follows directly from \eqref{eqn:unj_bound} and \eqref{eqn:vnj_bound}.
\end{proof}
This proposition ensures that for large $M$, the total distance of translations is tiny; in particular, it can be less than $\frac{4\log M}M$, so that the projections of the translated curved rectangles onto the $x$-axis all contain $[\frac{4\log M}M,1]$.

\subsection{Upper bound of measure}
In this subsection, we control the measure of the set $F_M$ we constructed.

\begin{theorem}\label{thm_measure_bound}
    The set $F_M$ satisfies 
    \begin{equation*}
        |F_M(\delta_0 2^{-M})|\lesssim \delta_0 M^{-2}.
    \end{equation*}
\end{theorem}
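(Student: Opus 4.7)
Slice $F_M$ vertically,
\[
|F_M|=\int_{4\log M/M}^1 \bigl| T^{(m)}\cap(\{x\}\times\mathbb R) \bigr|\,dx,
\]
and estimate the one-dimensional cross-section. First, $T^{(m)}$ decomposes into $2^{M-m}=2^{M/2}$ blocks $T_{n_0}^{(m)}$ indexed by $n_0$ a multiple of $2^m$. By Lemma \ref{lemma of lower} each leader curve $L^{(0)}_{n_0}$ remains un-translated (its index has lowest bit $>m$), so distinct blocks are separated vertically at $x$ by roughly $\delta_0 2^{-M/2}f(x)$, and their cross-sections at $x$ may be summed.

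Next, iterate the tangency conditions \eqref{eqn_unj}\eqref{eqn_vnj} and Taylor-expand along the binary tangency tree to obtain, for $k\in\{0,\ldots,2^m-1\}$,
\[
L_{n_0+k}(x)-L_{n_0}(x) \;\approx\; \sum_{j:\eps_j(k)=1}\alpha_j(x), \qquad \alpha_j(x) \;\approx\; \tfrac{1}{2}f''(x_j)\,\delta_0\,2^{j-1-M}(x-x_j)^2,
\]
up to lower-order corrections absorbable into the single-strip thickness $\ell(x):=\delta_0 2^{-M}f(x)$. This reduces the per-block cross-section to the measure of a union of $\ell(x)$-thick intervals placed at the Minkowski-sum heights $\{\sum_{j\in S}\alpha_j(x):S\subseteq\{1,\ldots,m\}\}$.

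This measure is controlled by a hierarchical recursion: writing $\mu_j(x)$ for the cross-section measure of a sub-block at level $j$,
\[
\mu_0(x)=\ell(x), \qquad \mu_{j+1}(x)\leq\min\bigl(2\mu_j(x),\,\mu_j(x)+\alpha_{j+1}(x)\bigr),
\]
reflecting the fact that the two level-$j$ sub-blocks joined at $x_{j+1}$ either overlap (if $\alpha_{j+1}\leq\mu_j$) or stack disjointly. Iterating yields a two-regime bound for $\mu_m(x)$: additive growth while $\alpha_j\leq\mu_{j-1}$, then doubling after a threshold level $J^\ast(x)$. Summing over the $2^{M-m}$ blocks and integrating on $[4\log M/M,1]$ then gives the claim. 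The cutoff is used in two ways: Proposition \ref{prop_translation_bound} gives $|u_n|,|v_n|\lesssim\delta_0 2^{-M/2}\ll 4\log M/M$ so all translated strips cover the slab $[4\log M/M,1]\times\mathbb R$; and the single-strip thickness bound \eqref{eqn_single_thickness_bound} provides $\ell(x)\gtrsim\delta_0 2^{-M}(\log M/M)^2$ on this range.

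The main obstacle is the hierarchical measure estimate itself: the crude bound $\mu_m(x)\leq\min(2^m\ell(x),\,\sum_j\alpha_j(x)+\ell(x))$ summed over blocks only yields $|F_M|\lesssim\delta_0$. The additional $M^{-2}$ saving must come from two sources working in tandem. First, for $x\in[x_{j_0},x_{j_0+1}]$, the levels $j$ with $j$ close to $j_0$ contribute $\alpha_j(x)=O(\delta_0 2^{j-M}(j-j_0)^2/M^2)$, which stays below $\ell(x)$ for a non-trivial range of $j$; these levels all lie in the merging regime of the recursion and compound to give a factor $M^{-2}$ in the final per-$x$ cross-section. Second, the combinatorial structure of the Minkowski sums clusters heights so that the effective number of distinct $\ell$-cells per block is much smaller than $2^m$. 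Combining these two effects and integrating against $dx$ on $[4\log M/M,1]$ produces the claimed $|F_M|\lesssim \delta_0 M^{-2}$.
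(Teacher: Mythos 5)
Your slicing strategy and the identification that the tangency at $x_i$ contributes a height increment of order $\delta_0 2^{i-M}(x-x_i)^2$ are both on target, and they match the quantities the paper actually manipulates. But the argument as you have organised it contains a genuine gap, and the final paragraph explicitly leaves the heart of the estimate as an unfinished heuristic.

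The gap is the hierarchical measure recursion $\mu_{j+1}(x)\le\min\bigl(2\mu_j(x),\,\mu_j(x)+\alpha_{j+1}(x)\bigr)$. The additive branch $\mu_{j+1}\le\mu_j+\alpha_{j+1}$ is justified by the picture "the two level-$j$ sub-blocks either overlap or stack disjointly," but this only works when the level-$j$ cross-section is a single interval. In general the cross-section is a union of many $\ell$-intervals, and for a disconnected set $A$ the inequality $|A\cup(A+\alpha)|\le|A|+\alpha$ is false (take $A=[0,1]\cup[10,11]$, $\alpha=1$: the left side is $4$, the right side is $3$). Since the cross-sections here are precisely unions of many thin intervals, the recursion cannot be invoked as stated. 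What is true is the corresponding inequality for the \emph{spread} $\sigma_j:=\operatorname{diam}$ of the cross-section: $\sigma_{j+1}\le\sigma_j+\alpha_{j+1}$, together with $\mu_j\le\sigma_j+\ell$. Tracking the spread rather than the measure repairs the step, but this is a different quantity than the one your recursion controls, and the distinction is exactly where the argument currently breaks.

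The paper sidesteps this issue entirely by choosing the grouping scale adaptively in $x_0$: for $x_0\in[x_j,x_{j+1}]$ it groups the $2^M$ strips into $2^{M-j}$ blocks of $2^j$ consecutive strips and bounds the \emph{spread} of a single block, $L_n(x_0)-L_p(x_0)$, directly via Taylor expansion, obtaining $\lesssim\delta_0 M^{-2}2^{j-M}$; summing over the $2^{M-j}$ blocks immediately gives $\delta_0 M^{-2}$, with the single-strip thickness $\delta_0 2^{-M}$ absorbed because $j\ge 2\log M$. This is cleaner than a level-by-level recursion: there is no need to compare $\alpha_j$ against $\mu_{j-1}$, no threshold $J^\ast(x)$, and no appeal to a combinatorial "clustering of Minkowski-sum heights" — that second "source of saving" in your last paragraph is in fact not needed. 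If you want to keep your fixed grouping at level $m$, you can still recover the bound by writing $\mu_m\le 2^{m-j}\mu_j\le 2^{m-j}(\sigma_j+\ell)$ and then bounding $\sigma_j\le\sum_{i\le j}\alpha_i\lesssim\delta_0 2^{j-M}M^{-2}$ exactly as the paper does; but the incorrect additive measure recursion, and the unresolved "two effects in tandem" conclusion, mean the proposal as written does not constitute a proof.
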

As a corollary, we obtain the lower bound for the norm of the maximal operator $R_\delta$ claimed in Theorem \ref{thm_main}.
\begin{corollary}
    The lower bound \eqref{eqn_M_lower_bound} holds.
\end{corollary}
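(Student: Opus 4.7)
The plan is to test $\mathcal R_\delta$ against $g = \mathbf{1}_{F_M}$ with $2^{-M}$ calibrated to $\delta$. I fix $a_0 = 1$, $\delta_0 = 1$, and for given small $\delta > 0$ pick $M \in 4\N$ with $c_1\, 2^{-M} \le \delta \le c_2\, 2^{-M}$ for some constants $c_1, c_2$ depending only on $f$; then $M \sim \log \delta^{-1}$. The denominator is immediate from Theorem \ref{thm_measure_bound}:
\[
\norm{g}_{L^p(\R^2)} = |F_M|^{1/p} \lesssim M^{-2/p} \sim (\log \delta^{-1})^{-2/p}.
\]

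For the numerator, I will show $\mathcal R_\delta g(a) \gtrsim 1$ uniformly for $a \in [1,2]$. Given such $a$, pick the unique $n \in \{0, 1, \dots, 2^M - 1\}$ with $a_n \le a \le a_{n+1}$, where $a_n := 1 + n\cdot 2^{-M}$, and in the supremum defining $\mathcal R_\delta g(a)$ center at $(x_1, x_2) = (u_n, v_n)$. Because $F_M \supseteq (T_n^{(0)} + (u_n, v_n)) \cap ([\tfrac{4\log M}{M}, 1] \times \R)$, for each $t$ with $u_n + t \in [\tfrac{4\log M}{M}, 1]$ the vertical fiber of $F_M$ at first coordinate $u_n + t$ contains $v_n$ plus the interval with endpoints $a_n f(t)$ and $a_{n+1} f(t)$, which has length $2^{-M} |f(t)|$ and contains the point $v_n + a f(t)$. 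Hence the set of $s \in [-\delta, \delta]$ for which $(u_n + t, v_n + af(t) + s) \in F_M$ has one-dimensional measure $\gtrsim \min(\delta, 2^{-M} |f(t)|)$. From $\inf f'' > 0$ and $f'(0) \ge 0$ in \eqref{eqn_condition_f_1}, $f$ is strictly monotone on $[0,1]$ so $|f(t)|$ is bounded below by a positive constant on a subinterval of length $\sim 1$; by Proposition \ref{prop_translation_bound}, $u_n = O(2^{-M/2})$ is negligible, so this subinterval lies inside the allowed range of $t$. Integrating and dividing by $2\delta$ yields
\[
\mathcal R_\delta g(a) \gtrsim \frac{1}{\delta} \int \min(\delta,\, 2^{-M} |f(t)|)\, dt \gtrsim 1
\]
for the calibrated range of $\delta$. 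Taking the $L^q$-norm over $a\in[1,2]$ then gives $\norm{\mathcal R_\delta g}_{L^q([1,2])} \gtrsim 1$.

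Combining the two estimates,
\[
R(p, q, \delta) \gtrsim \frac{\norm{\mathcal R_\delta g}_{L^q([1,2])}}{\norm{g}_{L^p(\R^2)}} \gtrsim M^{2/p} \sim (\log \delta^{-1})^{2/p},
\]
which is \eqref{eqn_M_lower_bound}. The only delicate point --- really a routine calibration --- is matching the tube width $\delta$ of the maximal operator to the intrinsic scale $2^{-M}$ of the building block, so that the $\delta$-tube around the curve $t \mapsto (u_n + t, v_n + af(t))$ intersects $F_M$ in enough measure to give a constant lower bound on $\mathcal R_\delta g(a)$ uniformly in $a$. Once the scales are aligned, the estimate transfers directly from the measure bound of Theorem \ref{thm_measure_bound}.
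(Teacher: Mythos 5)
Your proof is correct and follows the same approach as the paper: take $g = \mathbf 1_{F_M}$ with $a_0=1$, $\delta_0=1$, calibrate $\delta \sim 2^{-M}$, invoke Theorem \ref{thm_measure_bound} for the denominator, and check that $\mathcal R_\delta g(a)\gtrsim 1$ uniformly in $a$ by centering the tube at $(u_n,v_n)$ for the curved rectangle indexed by $n$ with $a\in[a_n,a_{n+1}]$. The paper states the last step as ``the construction gives $\mathcal R_\delta g(a)\sim 1$'' without elaboration; your fiber-by-fiber verification is a faithful fleshing-out of that claim rather than a different route.
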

\begin{proof}[Proof of corollary assuming Theorem \ref{thm_measure_bound}]
    Let $\delta=2^{-M}$ and take $g=1_{F_M}$ corresponding to $a_0=1,\delta_0=1$. Then the construction gives $\mathcal R_\delta g(a)\sim 1$ for every $a\in [1,2]$. Then the result follows from Theorem \ref{thm_measure_bound}.
\end{proof}

\begin{proof}[Proof of Theorem \ref{thm_measure_bound}]
It suffices to show that for each $x_0\in [\frac{4\log M}M,1]$,
\begin{equation}\label{eqn:each_slice}
    |\{y\in \R:(x_0,y)\in F_M(\delta_0 2^{-M})\}|\lesssim \delta_0 M^{-2}.
\end{equation}
Fix $j\in [2\log M,m-1]$ and assume $x_0\in [x_j,x_{j+1}]$. For each $n=0,1,\dots,2^M-1$, we write
\begin{equation}\label{eqn_n_sum}
    n=p+\sum_{i=1}^j \eps_i(n) \cdot 2^{i-1}\quad (p=0,2^j,\cdots, 2^{M}-2^j),
\end{equation}
for digits $\eps_i(n)\in \{0,1\}$.

In words, for each $j$, we group the translated curved rectangles $T_n^{(0)}+(U^{(m)}_n,V_n^{(m)})$ into $2^{M-j}$ groups, and the curved rectangles in the $p2^{-j}$-th group keep being translated together after Step $j$.

Denote by $L_n^{(j)}$ the bottom curve of $T_n^{(0)}$ after $j$ steps of translations, namely,
\begin{equation}\label{eqn_Lnj}
    L_n^{(j)}(x):=L_n^{(0)}(x-U_n^{(j)})+V_n^{(j)}.
\end{equation}
By the triangle inequality, it suffices to show that for each $p=0,2^j,\dots,2^M-2^j$, the thickness of the $p2^{-j}$-th group is $\lesssim \delta_0 M^{-2}2^{j-M}$. After $m$ steps of translations, the ``bottom curve" of this group at $x_0$ is $L^{(m)}_p(x_0)$.

Hence, to prove \eqref{eqn:each_slice}, we need to show that for each $p\le n\le p+2^{j}-1$,
\begin{equation*}
    L^{(m)}_n(x_0)-L^{(m)}_p(x_0)+(1+2)\delta_0 2^{-M}\lesssim \delta_0 M^{-2}2^{j-M}.
\end{equation*}
Here, $L^{(m)}_n(x_0)-L^{(m)}_p(x_0)$ is the distance between the bottoms, $\delta_0 2^{-M}$ is the thickness of one smallest curved rectangle, and $2\delta_0 2^{-M}$ comes from the thickening of $F_M$ by $\delta_0 2^{-M}$.

But by our choice that $j\ge 2\log M$, we have 
\begin{equation}\label{eqn_single_thickness_bound}
    2^{-M}\lesssim M^{-2}2^{j-M}.
\end{equation}
Thus our task reduces to showing
\begin{equation*}
    L^{(m)}_n(x_0)-L^{(m)}_p(x_0)\lesssim \delta_0 M^{-2}2^{j-M}, \quad \forall 0\le n\le 2^{M}-1.
\end{equation*}
We trace back to the configuration immediately after Step $j$. That is, we let 
\begin{equation*}
    x:=x_0-U^{(m)}_n+U^{(j)}_n,
\end{equation*}
which lies within $[x_{j-1},x_{j+1}]$, by \eqref{eqn_partial_sum_bound}. By the definition of $x$, we thus have
\begin{align*}
    L^{(m)}_n(x_0)-L^{(m)}_p(x_0)=L_n^{(j)}(x)-L_p^{(j)}(x),
\end{align*}
and so we will estimate $L_n^{(j)}(x)-L_p^{(j)}(x)$. By \eqref{eqn_Lnj} and \eqref{eqn_defn_an},
$$
\left\{
\begin{aligned}
    & L_n^{(j)}(x) =  a_n f(x- U_n^{(j)} ) + V_n^{(j)}, \\
    & L_{p}^{(j)}(x) =  a_p f(x- U_p^{(j)} ) + V_p^{(j)}. \\
\end{aligned}
\right.
$$
According to the definition of $U_n^{(j)}$ and $V_n^{(j)}$, we know that 
$$U_p^{(j)} = V_p^{(j)} =0. $$
We then Taylor expand $f$: 
$$f(x- U_n^{(j)}) = f(x) -f'(x) U_n^{(j)} + O(U_n^{(j)})^2, $$ 
and by \eqref{eqn_partial_sum_bound}, we have $|U_n^{(j)}|^2\lesssim \delta_0^2 2^{-2j-2M}\ll \delta_0 M^{-2}2^{j-M}$. Thus everything boils down to showing
\begin{equation}\label{eqn_00}
|a_n(f(x) - f'(x) U_n^{(j)}) + V_n^{(j)} - a_p f(x)|\lesssim \delta_0 M^{-2}2^{j-M}.
\end{equation}
We compute the left hand side using \eqref{eqn_n_sum} and \eqref{eqn_partial_sum}:
\begin{equation}\label{eqn:Mar_4_9:35}
    \text{LHS of \eqref{eqn_00}}=\sum_{i=1}^j\eps_i(n) \left(\delta_0 2^{i-1-M}f(x)-a_n f'(x)u_{\fj n i}^{(i)}+v_{\fj n i}^{(i)}\right).
\end{equation}
Using \eqref{eqn:unj_big_O} and the fact that $n-\fj n i\le 2^{i-1}$ which implies $a_n-a_{\fj n i}\le \delta_0 2^{i-1-M}$, we have 
\begin{align*}
  u_{\fj n i}^{(i)}&=\frac{\delta_0 2^{i-1-M} } {a_{\fj n i}} \frac{f'(x_i)}{f''(x_i)}+O(\delta_0^2 2^{2i-2M}) \\
  &=\frac{\delta_0 2^{i-1-M} } {a_{n}} \frac{f'(x_i)}{f''(x_i)}+O(\delta_0^2 2^{2i-2M}).
\end{align*}
Similarly, using \eqref{eqn_vnj}, we have
\begin{align*}
    v_{\fj n i}^{(i)}&=\delta_0 2^{i-1-M}\left(-f(x_i)+\frac{f'(x_i)^2}{f''(x_i)}\right)+O(\delta_0^2  2^{2i-2M}).
\end{align*}
Plugging this into \eqref{eqn:Mar_4_9:35}, we have
\begin{align*}
    &\text{LHS of \eqref{eqn_00}}\\
    &=\sum_{i=1}^j\eps_i(n) \delta_0 
 2^{i-1-M}\left(f(x)-\frac{f'(x)f'(x_i)}{f''(x_i)}+\frac{f'(x_i)^2}{f''(x_i)}-f(x_i)\right)+O(\delta_0^2  2^{2j-2M})\\
    &=\sum_{i=1}^j\eps_i(n)\delta_0  2^{i-1-M} \left(g(x)-g(x_i)\right)+O(\delta_0^2  2^{2j-2M}),
\end{align*}
where
\begin{equation*}
    g(x):=f(x)-\frac{f'(x_i)}{f''(x_i)}f'(x).
\end{equation*}
Hence, to prove \eqref{eqn_00}, it suffices to show 
\begin{equation*}
    \sum_{i=1}^j 2^{i}|g(x)-g(x_i)|\lesssim 2^j M^{-2}.
\end{equation*}
But $g'(x_i)=0$, and $g''$ exists and is bounded, since $f'''$ exists and is bounded. Thus we have $|g(x)-g(x_i)|\lesssim |x-x_i|^2$, and so we have reduced the problem to proving
\begin{equation*}
    \sum_{i=1}^j 2^i |x-x_i|^2\lesssim 2^j M^{-2}.
\end{equation*}
However, using $x\in  [x_{j-1},x_{j+1}]$ and the definition that $x_i=2i/M$ (see \eqref{eqn:defn_x_j}), this further reduces to the elementary inequality
\begin{equation*}
    \sum_{i=1}^j 2^i (j-i)^2\lesssim 2^j,
\end{equation*}
which follows from summation by parts formula. This finishes the proof.

\end{proof}

\section{Construction of zero measure Kakeya set}\label{sec_iteration}

In this section, we use a routine method to construct the curved Kakeya set $K$ with zero measure mentioned in Theorem \ref{thm_general}, based on the sets $F_M$ constructed in the previous section.

\subsection{Step 1}

Start with $T^{(0)}(a_0=1,\delta_0=1)$ as defined in \eqref{eqn_T0}. Pick a large integer $M\in 2\N$ such that we have all the results in Section \ref{sec_construction}. Denote
\begin{equation*}
    M_l:=M^l.
\end{equation*}
We then construct the set
\begin{equation*}
    K_1:=F_{M_1}=F_M,
\end{equation*}
which is a compact subset of $[\frac{4\log M}M,1]\times [-C_0,C_0]$ for some large constant $C_0=C_0(f)$. Denote $A_1:=\frac{4\log M}{M}$.

\subsection{Step 2}
Recall $K_1$ consists of $2^{M_1}$ smaller curved rectangles, which we denote by $T(1)$. Each $T(1)$ is of the form $T^{(0)}(a,2^{-M_1})+(u,v)$ for some $a\in [1,2]$, $|u|\lesssim 2^{-M_1/2}$, $|v|\lesssim 2^{-M_1/2}$ by \eqref{eqn_unvn}. Theorem \ref{thm_measure_bound} states that 
\begin{equation}\label{eqn:Mar_4_K1}
    |K_1(2^{-M_1})|\lesssim M_1^{-2}.
\end{equation}
We then apply the same procedure in Section \ref{sec_construction} to each $T(1)$, this time with $a_0$ taken to be this $a$, $M$ taken to be $M_2$, and $\delta_0$ taken to be $2^{-M_1}$. (More precisely, to perfectly fit the notation of Section \ref{sec_construction} we should first reverse the translation by $(U_n^{(m)},V_n^{(m)})$ to $T(1)$, rescale so that it is over $[0,1]$, apply the construction and then rescale back in the end.) 

Denote by $\tilde T(1)$ the configuration obtained from $T(1)$ after the aforesaid operation, and note that $\tilde T(1)$ consists of $2^{M_2}$ curved rectangles of thickness $2^{-M_1-M_2}$. By Theorem \ref{thm_measure_bound}, the $2^{-M_1-M_2}$ neighbourhood of $\tilde T(1)$ has Lebesgue measure $\lesssim 2^{-M_1} M_2^{-2}$.

Denote by $K_2$ the union of all such $2^{M_1}$ configurations $\tilde T(1)$. We have
\begin{equation}\label{eqn:Mar_4_K2}
    |K_2(2^{-M_1-M_2})|\lesssim 2^{M_1} (2^{-M_1} M_2^{-2})=M_2^{-2}.
\end{equation}
Also, by \eqref{eqn_unvn}, the translations at Step 2 is $\le C2^{-M_1-M_2/2}$, where $C$ depends only on $f$. Thus 
\begin{equation*}
    K_2(2^{-M_1-M_2})\sub K_1(2^{-M_1-M_2}+C2^{-M_1-M_2/2})\sub K_1(2^{-M_1}),
\end{equation*}
where we have assumed $M$ is large enough compared with $C$.

Lastly, we can check that the projection of $K_2$ onto the $x$-axis is equal to $[A_2,1]$ where
\begin{equation*}
    A_2:=1-\left(1-\frac{4\log M_1}{M_1}\right)\left(1-\frac{4\log M_2}{M_2}\right).
\end{equation*}

\subsection{Step 3}
We continue this process. Now $K_2$ consists of $2^{M_1+M_2}$ even smaller curved rectangles which we denote by $T(2)$. We then apply the same procedure in Section \ref{sec_construction} to each such $T(2)$, this time with $M$ taken to be $M_3$ and $\delta_0$ taken to be $2^{-M_1-M_2}$. This gives us the set $K_3$, whose projection onto the $x$-axis is equal to $[A_3,1]$ where
\begin{equation*}
    A_3:=1-\left(1-\frac{4\log M_1}{M_1}\right)\left(1-\frac{4\log M_2}{M_2}\right)\left(1-\frac{4\log M_3}{M_3}\right).
\end{equation*}
By a similar reason to \eqref{eqn:Mar_4_K1}, we have
\begin{equation*}
    |K_3(2^{-M_1-M_2-M_3})|\lesssim 2^{M_1+M_2} (2^{-M_1-M_2} M_3^{-2})=M_3^{-2}.
\end{equation*}
Also, by \eqref{eqn_unvn}, we have
\begin{equation}\label{eqn_K3K2}
    K_3(2^{-M_1-M_2-M_3})\sub K_2(2^{-M_1-M_2-M_3}+C2^{-M_1-M_2-M_3/2})\sub K_2(2^{-M_1-M_2}).
\end{equation}

\subsection{Step $l$}
Continuing this process, we obtain a nested sequence of nonempty compact sets $\tilde K_l:=K_l(2^{-M_1-\cdots-M_l})$ that satisfy
\begin{equation}\label{eqn:Mar_4_intermediate}
    \tilde K_l\sub K_{l-1}(2^{-M_1-\cdots-M_l}+C2^{-M_1-\cdots-M_{l-1}/2})\sub \tilde K_{l-1},
\end{equation}
whose measure satisfies
\begin{equation}\label{eqn:Mar_4_Kl}
    |\tilde K_l|\lesssim M_l^{-2}.
\end{equation}

We are now ready to define
\begin{equation*}
    K:=\bigcap_{l=1}^\infty \tilde K_l=\bigcap_{l=1}^\infty K_l(2^{-M_1-M_2-\cdots-M_{l}}),
\end{equation*}
which is a nonempty compact set by Cantor's intersection theorem.
\begin{theorem}
The following statements hold for $K$.
\begin{enumerate}
    \item $K$ has zero two-dimensional Lebesgue measure.
    \item $K$ contains a translation of a piece of length $\sim 1$ of the graph of $y=af(x)$ for every $1\le a\le 2$.
    \item $|K(\delta)|\lesssim (\log \delta^{-1})^{-2}$.
\end{enumerate}
\end{theorem}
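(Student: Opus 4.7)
The plan is to reduce all three parts to an iterated application of Theorem \ref{thm_measure_bound}, combined with a limiting argument for the curve.

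\textbf{Iterated measure bound and parts (1), (3).} The proof of Theorem \ref{thm_measure_bound} actually yields the slightly stronger vertical-thickening bound $|F_M(\delta_0 2^{-M})| \lesssim \delta_0 M^{-2}$, since the extra $\delta_0 2^{-M}$ per group is already absorbed via \eqref{eqn_single_thickness_bound}. Applied at stage $n$ to each of the $\delta_{n-1}^{-1} = 2^{M_1+\cdots+M_{n-1}}$ atomic sub-rectangles of $K_{n-1}$ (each of aperture $\delta_{n-1}$, processed with parameter $M_n$), this yields
\begin{equation*}
    |K_n(\delta_n)| \lesssim \delta_{n-1}^{-1} \cdot \delta_{n-1} M_n^{-2} = M_n^{-2},
\end{equation*}
where $\delta_n := 2^{-M_1-\cdots-M_n}$. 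Since $K \subseteq K_n(\delta_n)$ for every $n$, part (1) follows by letting $n \to \infty$. For (3), given $\delta > 0$ I pick $n$ maximal with $\delta_n \geq \delta$, so $K(\delta) \subseteq K_n(\delta_n + \delta) \subseteq K_n(2\delta_n)$ and hence $|K(\delta)| \lesssim M_n^{-2}$. The super-geometric growth $M_i = M^i$ gives $\log \delta^{-1} \sim M_1 + \cdots + M_n \sim M^n = M_n$, so $|K(\delta)| \lesssim (\log \delta^{-1})^{-2}$.

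\textbf{Part (2).} The projection claim follows by compactness: each $K_n(\delta_n)$ is compact with $x$-projection exactly $[A_n, 1]$ (vertical thickening preserves the $x$-projection), the sets are nested, and $A_n \nearrow A_\infty = 1 - \prod_{i=1}^\infty \bigl(1 - 4(\log M_i)/M_i\bigr) < 1$ because $\sum_{i} i \log(M) \cdot M^{-i} < \infty$. Hence $K$ projects onto $[A_\infty, 1]$. To exhibit an actual curve inside $K$, I choose a sequence of nested atomic rectangles with apertures $[a_n, a_n + \delta_n]$ and cumulative translations $(u_n, v_n)$. The apertures converge to some $a^* \in [1, 2]$ with $|a^* - a_n| \leq \delta_n$, while Proposition \ref{prop_translation_bound} applied stage-wise ensures $(u_n, v_n)$ is Cauchy with $|(u^*, v^*) - (u_n, v_n)| \lesssim \delta_n 2^{-M_{n+1}/2}$. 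A direct calculation then gives $|a^* f(x - u^*) + v^* - (a_n f(x - u_n) + v_n)| \leq C \delta_n$ for some $C = O(\|f\|_\infty)$, so the limit curve $\gamma^*(x) := a^* f(x - u^*) + v^*$ lies in $K_n(C\delta_n)$ for every $n$.

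\textbf{Main obstacle.} The final issue is the constant mismatch $C\delta_n$ versus $\delta_n$: a priori $\gamma^*$ belongs only to the larger intersection $\bigcap_n K_n(C\delta_n)$. I resolve this using the telescoping inclusion already in the excerpt, now one level up: $K_{n+1}(C\delta_{n+1}) \subseteq K_n\bigl((C \cdot 2^{-M_{n+1}} + 2C \cdot 2^{-M_{n+1}/2})\delta_n\bigr)$, and the parenthetical factor is $\ll 1$ for $M$ chosen large enough at the outset (this depends on $\|f\|_\infty$ only). Hence $K_{n+1}(C\delta_{n+1}) \subseteq K_n(\delta_n)$, giving $\gamma^* \in K_n(\delta_n)$ for every $n$ and therefore $\gamma^* \in K$, completing (2).
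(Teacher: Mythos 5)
Your proposal is correct and follows essentially the same route as the paper: parts (1) and (3) iterate Theorem \ref{thm_measure_bound} over the sub-rectangles and use the telescoping inclusions $K_{n+1}(\delta_{n+1})\subseteq K_n(\delta_n)$, exactly as in the paper (the paper writes the bound as $|K_n|+\text{(thickening increment)}$ rather than your sharpened $|F_M(\delta_0 2^{-M})|\lesssim \delta_0 M^{-2}$, but these are the same computation). The only substantive difference is in part (2), where the paper stops at $\lim_n A_n<1$ and leaves the ``Therefore'' implicit; you supply the Cantor-type limiting argument producing an explicit limit curve $\gamma^*$ and handle the $C\delta_n$ vs.\ $\delta_n$ mismatch via the telescoping inclusion. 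That extra step is correct and fills a genuine gap in the paper's exposition, though one can avoid the constant mismatch entirely by comparing $\gamma^*$ to the curve $a^*f(x-u_n)+v_n$ (with the limiting aperture $a^*\in[a_n,a_n+\delta_n]$) rather than to the bottom curve $a_nf(x-u_n)+v_n$; then the deviation is only $O(\delta_n 2^{-M_{n+1}/2})\ll\delta_n$ and $\gamma^*\in K_n(\delta_n)$ directly.
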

\begin{proof}
\begin{enumerate}
    \item This part follows immediately from \eqref{eqn:Mar_4_Kl} by taking $l\to \infty$.
    
    \item After the $l$-th step, the projection of $K_l$ onto the $x$-axis is equal to $[A_l,1]$ where
    \begin{equation*}
        A_l:=1-\prod_{l'=1}^l\left(1-\frac{4\log M_{l'}}{M_{l'}}\right).
    \end{equation*}
    Thus it suffices to prove that $\lim_l A_l<1$, which is equivalent to proving $\sum_{l}\frac{4\log M_l}{M_l}<\infty$. But by our choice of $M_l=M^l$, this follows.
    \item Given $\delta\in (0,1)$, take the unique $l$ such that 
    \begin{equation*}
        2^{-M_1-\cdots-M_{l+1}}<\delta\le 2^{-M_1-\cdots-M_{l}}.
    \end{equation*}
    Then by \eqref{eqn_unvn}, we have
    \begin{align*}
        |K(\delta)|
        &\le |\tilde K_l(\delta)|\\
        &\le |K_{l-1}(2^{-M_1-\cdots-M_{l}}+C2^{-M_1-\cdots-M_l/2}+\delta)|\\
        &\le |K_{l-1}(2^{-M_1-\cdots-M_{l-1}})|\\
        &\lesssim M_{l-1}^{-2}.
    \end{align*}
    On the other hand, using $2^{-M_1-\cdots-M_{l+1}}<\delta$, we have $M_l=M^l\gtrsim \log \delta^{-1}$. Hence we have $M_{l-1}^{-2}\lesssim (\log \delta^{-1})^{-2}$.
\end{enumerate}
\end{proof}

\section{Appendix}
In this appendix, we provide a brief summary of the $L^p$ to $L^q$ boundedness of the maximal operator $\mathcal R_\delta$ defined in \eqref{eqn:cinematic_delta}. We assume in addition that $f$ is smooth, but without \eqref{eqn_condition_f_3}. Recall from Section \ref{sec:cinematic} that the second inequality of \eqref{eqn_condition_f_1} guarantees that the family of curves $\{(t,af(t)):0\le t\le 1\}$, $1\le a\le 2$ satisfies the cinematic curvature condition as stated in \cite{Kolasa_Wolff}.

\begin{theorem}
    In the $(1/p,1/q)$-interpolation diagram (see Figure \ref{fig:interpolation}), let $O$ be the origin, and
    \begin{align*}
    &A=\left(0,1\right), \quad B=\left(\frac 1 {3},1\right),\quad C=\left(\frac 3 {8},1\right),\quad D=\left(1,1\right),\\
    &E=\left(1,0\right),\quad F=\left(\frac 3 8,0\right),\quad G=\left(\frac 3 8,\frac 5 {16}\right),\quad H=\left(\frac 1 {3},\frac 1 3\right).
\end{align*}

\begin{figure}[h]
    \centering
    \includegraphics[width=0.74\linewidth]{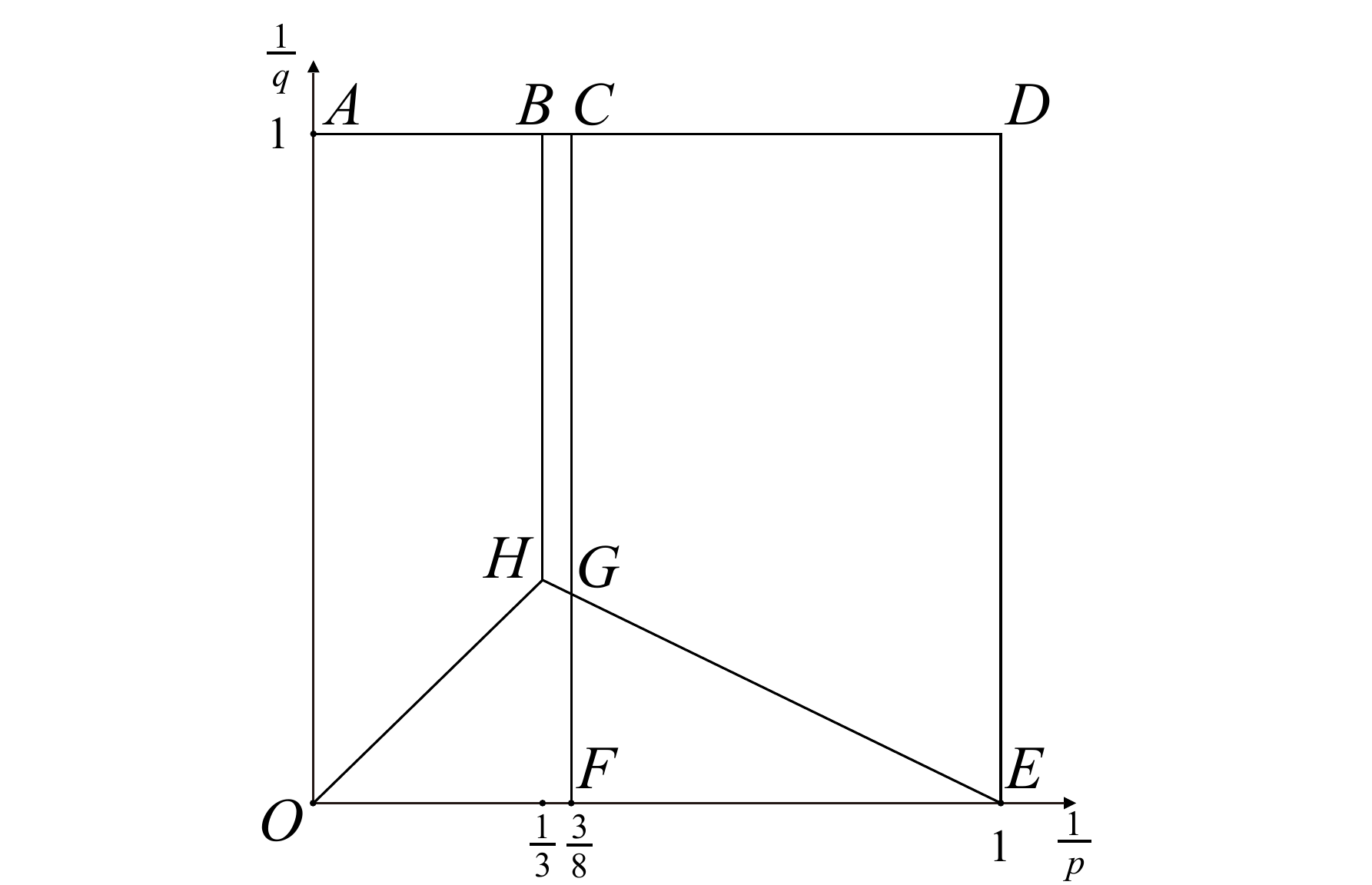}
    \vspace{-3mm}
    \caption{Interpolation diagram of $\mathcal R_\delta$}
    \label{fig:interpolation}
\end{figure}

Then we have the following estimates (see Table \ref{tab:table1}). Here all line segments and polygons below include their boundaries, and the implicit constants are allowed to depend on $p,q$ but not $\delta$.

\begin{table}[h]
\begin{tabular}{|c|c|}
\hline
 $R(p,q,\delta)$ & $\forall(1/p,1/q)\in$ \\ \hline
 $\sim 1$ & $OA$ \\ \hline
     $\begin{cases}
        \lesssim_\eps \delta^{-\eps}\\ 
        \gtrsim (\log \delta^{-1})^{2/p}
    \end{cases}$ & $OABH\backslash OA$ \\ \hline
     $\begin{cases}
        \lesssim_\eps \delta^{-\eps+\frac 1 2-\frac 3 {2p}}\\ 
        \gtrsim \delta^{\frac 1 2-\frac 3 {2p}}
    \end{cases}$ &  $BCGH\backslash BH$ \\ \hline
$\sim\delta^{\frac 1 2-\frac 3 {2p}}$ & $CDEG\backslash CG$ \\ \hline
$\sim \delta^{\frac 1 q-\frac 1 {p}}$ & $EFG\backslash FG$ \\ \hline
$\begin{cases}
        \lesssim_\eps \delta^{-\eps+\frac 1 q-\frac 1 {p}}\\ 
        \gtrsim \delta^{\frac 1 q-\frac 1 {p}}
    \end{cases}$ & $OFGH\backslash OF$ \\  \hline
$\sim \delta^{-1/p}$ & $OE$ \\ \hline
\end{tabular}
\caption{$L^p$ to $L^q$ boundedness of the maximal operator $\mathcal R_\delta$\looseness=-1}\label{tab:table1}
\end{table}
\end{theorem}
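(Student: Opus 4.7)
My plan is to combine a short list of endpoint upper and lower bounds with Riesz--Thorin interpolation and Jensen's inequality on the bounded interval $[1,2]$ to cover each polygonal region in the diagram. The upper bounds come from: the trivial $L^\infty \to L^q$ estimate $R \sim 1$ on $OA$, coming from $\mathcal R_\delta f(a) \le \|f\|_\infty$; the trivial $L^1 \to L^\infty$ estimate $R \le C\delta^{-1}$ at $E$, coming directly from the definition of $\mathcal R_\delta$; Wolff's bound \eqref{eqn_L3} at $H$, applicable to our family since \eqref{eqn_condition_f_1}--\eqref{eqn_condition_f_3} imply the cinematic-curvature condition \eqref{eqn_CGY}; and the Kolasa--Wolff bound \eqref{eqn_KW} along the segment $EG$ (the line $1/p + 2/q = 1$), giving $R \lesssim \delta^{1/2 - 3/(2p)}$ without $\eps$-loss. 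Since $\|\cdot\|_{L^q([1,2])}$ is monotone nondecreasing in $q$ by Jensen's inequality, any upper bound at $(1/p, 1/q_0)$ propagates to all $(1/p, 1/q)$ with $1/q \ge 1/q_0$; Riesz--Thorin interpolation among the resulting endpoints then covers every polygon listed, with the $\eps$-loss from Wolff propagating accordingly.

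For the lower bounds I would test $\mathcal R_\delta$ on four families. Testing $g = 1_{B(0,\delta)}$ on a single $\delta$-ball yields $R \gtrsim \delta^{-1/p}$ on $OE$. Testing $g = 1_T$ on the $\delta$-neighbourhood of a single curve $\{(t, af(t))\}$ gives $\|g\|_p \sim \delta^{1/p}$ and $\|\mathcal R_\delta g\|_q \sim \delta^{1/q}$, because the cinematic curvature forces $\mathcal R_\delta g(a')$ to be small once $|a' - a| \gg \delta$; this produces $R \gtrsim \delta^{1/q - 1/p}$. A curved Nikodym packing (the parabolic analogue of the original Kolasa--Wolff circle construction, realisable for the family $af(x)$ by a simpler version of Theorem \ref{thm_KW}) yields a set $K$ whose measure matches the Kolasa--Wolff lower bound, and testing $g = 1_K$ gives $R \gtrsim \delta^{1/2 - 3/(2p)}$ on the relevant regions. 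Finally, testing $g = 1_{K_n}$ with $\delta = 2^{-M_1 - \cdots - M_n}$ from Section \ref{sec_iteration}, together with the measure bound $|K_n| \lesssim (\log \delta^{-1})^{-2}$, gives the refined lower bound $R \gtrsim (\log \delta^{-1})^{2/p}$.

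With these vertex bounds and test-function constructions in place, each of the seven rows of the table is verified by comparing the interpolated upper bound with the strongest applicable lower bound. The regions $OA$, $CDEG \setminus CG$, $EFG \setminus FG$, and $OE$ match without loss; the three regions $OABH \setminus OA$, $BCGH \setminus BH$, and $OFGH \setminus OF$ inherit a $\delta^{-\eps}$ upper-bound loss from Wolff, paired with the $(\log \delta^{-1})^{2/p}$ lower bound from our Section \ref{sec_iteration} construction. The main obstacle is producing the genuinely matching polynomial lower bound $\delta^{1/2 - 3/(2p)}$ on $CDEG \setminus CG$, which requires the right choice of an intermediate Nikodym packing (between a single tube and all apertures) together with a cinematic-curvature computation to extract the correct exponent; the rest of the argument is interpolation machinery and the test-function computations above.
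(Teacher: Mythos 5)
Your framework of endpoint bounds plus interpolation and the monotonicity of $L^q([1,2])$ norms matches the paper's strategy, and most of your ingredients (trivial bounds on $OA$ and at $E$, Wolff/Zahl at $H$, Kolasa--Wolff on $EG$, the Section~\ref{sec_iteration} set $K_n$ for the $(\log\delta^{-1})^{2/p}$ bound, and the thickened single curve for $\delta^{1/q-1/p}$) are the right ones. However, two of your lower-bound test functions do not work as stated.

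First, for $OE$ you test on $g=\mathds 1_{B(0,\delta)}$. This gives $\norm{g}_p\sim\delta^{2/p}$, while $\mathcal R_\delta g(a)\sim\delta$ for every $a$ (the tube meets the ball in a set of measure $\sim\delta^2$, divided by the $2\delta$ normalisation), so $\norm{\mathcal R_\delta g}_\infty\sim\delta$ and you only get $R(p,\infty,\delta)\gtrsim\delta^{1-2/p}$, which is strictly weaker than the claimed $\delta^{-1/p}$ whenever $p>1$ (and tends to $0$ for $p\ge2$). The correct test is the $\delta$-thickened curve $S$ from \eqref{eqn_defn_S}: then $\norm{\mathds 1_S}_p\sim\delta^{1/p}$ and $\norm{\mathcal R_\delta\mathds 1_S}_\infty\sim1$, giving $\delta^{-1/p}$.

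Second, the lower bound $\delta^{1/2-3/(2p)}$ on $CDEG\setminus CG$ and $BCGH\setminus BH$, which you flag as the ``main obstacle'' and propose to obtain from a curved Nikodym packing, does not come from any such packing. A Nikodym-type set has measure controlled only up to logarithms and would yield a logarithmic lower bound, not a power of $\delta$. The paper's answer is elementary: take $g=\mathds 1_T$ where $T$ is a $\delta^{1/2}\times\delta$ rectangle. Then $\norm{g}_p\sim\delta^{3/(2p)}$, while by the curvature of $af$, for every aperture $a$ one can place a $\delta$-neighbourhood of the curve so that over an $x$-interval of length $\sim\delta^{1/2}$ it stays inside $T$, so $\mathcal R_\delta g(a)\gtrsim\delta^{1/2}$ uniformly in $a$, giving $\norm{\mathcal R_\delta g}_q\gtrsim\delta^{1/2}$ for all $q$ and hence $R(p,q,\delta)\gtrsim\delta^{1/2-3/(2p)}$. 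No packing, intermediate or otherwise, is needed. With these two corrections your argument closes, and apart from the test functions it follows the same route as the paper.
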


\begin{proof}
    We first come to the case $p=\infty$, namely, $(1/p,1/q)\in OA$. Here the upper bound is trivial, and the lower bound follows from taking $f=1_{B(0,1)}$.

    For the case $q=\infty$, namely, $(1/p,1/q)\in OE$, the upper bound follows from interpolating between $p=1$ and $p=\infty$, and the lower bound follows from taking $f=1_{S}$ where
    \begin{equation}\label{eqn_defn_S}
        S=\{(x,ax^2):x\in [0,1],a\in [1,1+\delta]\}.
    \end{equation}
    
    When $(1/p,1/q)\in OABH\backslash OA$, to prove the upper bound, by H\"older's inequality, it suffices to prove the bound on $OH$ only. By \cite{Zahl_nonalgebraic}, it holds at $H=(1/3,1/3)$. Then this follows from interpolating between $(0,0)$ and $(1/3,1/3)$. The lower bound follows from Theorem \ref{thm_general}.

    When $(1/p,1/q)\in CDEG\backslash CG$, the upper bound follows from \cite{Kolasa_Wolff}. The lower bound follows from taking $f=1_T$, where $T$ is a rectangle of dimensions $\delta^{1/2}\times \delta$.

    When $(1/p,1/q)\in EFG\backslash FG$, the upper bound follows from interpolating between the points $(1/p,0)$ and $(\frac 1 p,\frac 1 2-\frac 1 {2p})$. The lower bound follows from taking $f=1_S$, where $S$ is given by \eqref{eqn_defn_S}.

    When $(1/p,1/q)\in BCGH\backslash BH$, the upper bound follows from interpolating between $BH$ and $CG$. The lower bound follows from taking $f=1_T$, where $T$ is a rectangle of dimensions $\delta^{1/2}\times \delta$.

    When $(1/p,1/q)\in OFGH\backslash OF$, the upper bound follows from interpolating between $OF$ and the segments $OH,HG$. The lower bound follows from taking $f=1_S$, where $S$ is given by \eqref{eqn_defn_S}.
\end{proof}

\subsection{Open problems}
It is natural to ask whether we can replace the $\eps$-loss on the upper bounds of $R(p,q,\delta)$ with a logarithmic loss (when we have a logarithmic lower bound), or with a constant loss (when we have a lower bound of the form $\delta^{\alpha}$, $\alpha>0$, without $\eps$-loss). More precisely, one can consider the following problems.
\begin{enumerate}
    \item Improving the upper bound of $R(3,3,\delta)$. One may work through all the details of \cite{SchlagJFA} and \cite{Wolff2000} to see if each time a loss of the form $\delta^{-\eps}$ can be upgraded to just $\log \delta^{-1}$; if so, then we can improve the bound to $(\log \delta^{-1})^{O(1)}$. However, improving towards the lower bound $(\log \delta^{-1})^{2/p}$ shown in Theorem \ref{thm_general} seems to require a new method.
    \item Removing the $\eps$-loss in the range $\frac 1 3<\frac 1 p\le\frac 3 8$. The condition $p<8/3$ is needed in the combinatorial argument in \cite{Kolasa_Wolff}, but the authors are not aware of any counterexample showing its necessity.

\end{enumerate}

 \bibliographystyle{alpha}
 \bibliography{sample}

\end{document}